\newtheorem{theorem}{Theorem}
\newtheorem{lemma}[theorem]{Lemma}
\newtheorem{definition}[theorem]{Definition}
\newtheorem{example}[theorem]{Example}
\newtheorem{remark}[theorem]{Remark}
\theoremstyle{remark}
\title{Minimal Spacing of Eigenvalues on Fractals}
\author{Bernard Akwei }
\date{\today}
\begin{document}

\maketitle
\begin{abstract}
\noindent On the unit interval (I), and the Sierpinski Gasket ($\mathcal{SG}$), the spectral decimation function of the Laplacian has similar properties that result in positive minimum spacing of eigenvalues.
Other fractals, for example the level-3 Sierpinski Gasket, $\mathcal{SG}_3$, may not necessarily enjoy these properties.
Our goal is to obtain an easy and sufficient criterion for positive infimum spacing of eigenvalues in the spectrum based on the properties of the spectral decimation function for the appropriate fractal.
We also give a sufficient condition for zero infimum spacing of eigenvalues.
\end{abstract}

\section{Introduction}

Given $\mathds{I}$ is the identity map, the spectrum of an operator, $T$, is the set, $\{\lambda: T-\lambda \mathds{I} \text{ is not invertible}\}$. A complex number 
$\lambda$ is called an eigenvalue of the operator $T$, if  the operator $T-\lambda \mathds{I}$ is not injective. 
We  examine the spacing of the eigenvalues in the spectrum  of the 
self-adjoint non-negative 
Laplacian, $\Delta$, on fractals.
This is closely related to the idea of gaps in the spectrum, which has been  studied   by many authors including the work of  Strichartz in \cite{strichartz2005laplacians}.

Because the spaces we will  consider are compact,  the Laplacian has discrete spectrum and we say gaps occur in the spectrum if, 
\[\limsup _{n\to\infty}\frac{\lambda_{n+1}}{\lambda_n} > 1,\]
where $\{\lambda_n\}$ are eigenvalues \cite{hare2011disconnected}. The study of the spectrum of the Laplacian on Fractals dates back to the works of Rammal and Toulouse in\cite{rammal1983random,rammal1984spectrum}.
Several advances have been made in describing these spectrums. For some fractals, it was discovered that certain properties about their spectrum were different from classical results. The difference were mainly due to the existence of large gaps in the spectrum.
The phenomenon of gaps have interesting consequences.
For example it results in convergence of Fourier series along some subsequence as shown by Strichartz in \cite{strichartz2005laplacians}. As a result, several papers including \cite{hare2011disconnected, drenning2005fractal} have discussed large gaps in the spectrum of the Laplacian.\\

 However, questions about small gaps can be challenging, and very few  articles consider small gaps, as noted by \cite{alonso2022minimal}. Such topics may include questions about spacing of eigenvalues( i.e. the difference between subsequent eigenvalues in the spectrum).
\\~\\
In recent research involving machine learning, the spacing of eigenvalues has been used to obtain useful estimates about relevant data~\cite{armstrong2025optimal}. In \cite{wahl2024kernel}, Wahl provides a non-asymptotic error estimate that results from approximating the eigenspaces of the Laplace-Beltrami operator depending on the spacing of its eigenvalues.
Also from Wielandt's Inequalities, stated in {Theorem}~\ref{wielant_inequalites}, if the infimum spacing of the eigenvalues of A, a symmetric operator on $\mathbb{R}^n$ is positive, then
\begin{equation*}
0 \leq \lambda_j^{\downarrow}(A+B)-\lambda_j^{\downarrow}(A) \leq \frac{\norm{B}^2}{\inf_j\abs{\lambda_j^{\downarrow}(A)-\lambda_{j+1}^{\downarrow}(A)}}, 1 \leq j \leq d
\end{equation*} 
and 
\begin{equation*}
0 \leq \lambda_j^{\downarrow}(A)-\lambda_j^{\downarrow}(A+B)\leq \frac{\norm{B}^2}{\inf_j\abs{\lambda_j^{\downarrow}(A)-\lambda_{j+1}(A)}}, d+1 \leq j \leq n.
\end{equation*}
Thus, we obtain an estimate for the error in the eigenvalues after some perturbation. The extensive research relating to the spacing of eigenvalues motivates our examination of  the infimum spacing.
Often times, it may be technical to obtain the actual value of this infimum.
In the case of $\mathcal{SG}$, \cite{alonso2022minimal} has shown that this coincides with the spectral gap (i.e. the spacing between the first two eigenvalues.) by examining and  considering the dynamics associated with the  spectral decimation function (see { Section}~\ref{SpectralDecimation}). This technique, named spectral decimation in ~\cite{fukushima1992spectral}, played a crucial role in describing the spectrum of $\mathcal{SG}$.
\\~\\
In this paper, we give a sufficient criterion that ensures that the infimum spacing of eigenvalues is positive as well as a sufficient condition for which the infimum spacing must be zero.
\\~\\
In { Section}~\ref{Preliminaries}, we consider some preliminaries including graph approximations and briefly discuss spectral decimation.\\
{Section}~\ref{MinimalSpacing} is the main part of this text. It is divided into two subsections.
In {subsection}~\ref{ZeroSpacing},  we prove {Theorem}~\ref{ZeroInf}, which alludes to the fact that for  positive infimum spacing in the spectrum, the multiplier of the zero fixed point must dominate that of all other fixed points. This is  a partial converse  to {Theorem}~\ref{mainthm} in {subsection}~\ref{PositiveSpacing}, where we show that under certain conditions the infimum spacing is necessarily positive. The main idea is to apply {Lemma}~\ref{main lemma} to  the discrete case and then pass to the continuous case using spectral decimation.\\
In each section, we also show some easy examples that demonstrates the conclusion of the Theorems. Examples ???? are interesting in that they show that certain features that may hold on $\mathcal{SG}$ may not necessarily apply to $\mathcal{SG}_3$

\section{Preliminaries}
\label{Preliminaries}

\subsection{Graph Approximations}
We begin with a discussion of Graph approximation following the notes by Robert Strichartz's from \cite{strichartz2006differential}.

A self similar set $K$,  is defined to be the unique nonempty compact set, satisfying:
\begin{equation}
\label{SelfSimilar}
K=\bigcup_{j=1}^N \varphi_j(K), N\geq 2
\end{equation}
for some distinct injective contraction mappings, $\varphi_j$ on a complete metric space.
Equation~(\ref{SelfSimilar})is known as a self similar identity.

 To  illustrate, we take as an example, the unit interval, $I$.
The mappings,
\begin{equation}
\label{ContractionsI}
F_0(x)=\frac{1}{2}x, F_1(x)=\frac{1}{2}x + \frac{1}{2}
\end{equation}
on I with fixed points $0,1$ respectively, gives the self similar identity,
\begin{equation}
\label{SelfSimilarI}
I=F_0I \cup F_1I
\end{equation}
Of course we could have more iterations by composing the $F_i$'s and writing,
\begin{equation}
\label{}
I=\bigcup_{|w|=m}F_wI, w=(w_1,\dots, w_m), w_j\in \{0,1\}
\end{equation}
we call $w$ a word of length $m=|w|$, $F_wI$ an m-level cell and we write $x\underset{m}\sim y$ if there is an m-level cell having $x,y$ as boundaries.
Let $V_0=\{q_0,q_1\}, q_0=0,q_1=1$ and define inductively
\begin{equation}
\label{vertices}
V_m := \bigcup_iF_iV_{m-1}, i=0,1.
\end{equation}
We regard the sets $V_m$ as the vertices of the graph $\Gamma_m$ with edge relation $x\underset{m}\sim y$
and build the graph $\Gamma_m$ inductively from the graph of $\Gamma_{m-1}$ by taking the two images $F_0\Gamma_{m-1}$ and $F_1\Gamma_{m-1}$.
Observe that $V_m$ is increasing to $V_*$, the set of dyadic rationals in $I$, which is dense in $I$.
And if we are only interested in continuous functions on $I$, then it suffices to know their action on $V_*$.
\\~\\
The case of $\mathcal{SG}$ may be viewed as a generalization of the case of the unit interval ($I$) with
\begin{equation}
\label{ContractionsSG}
F_i(x)=\frac{1}{2}(x-q_i) +q_i, i=0,1,2
\end{equation}
where $q_i$'s are the vertices of a nondegenerate triangle in the plane.
$\mathcal{SG}$ satisfies the self similar identity:
\begin{equation}
\label{SelfSimilarSG}
\mathcal{SG}=\bigcup_{i=0}^2F_i(\mathcal{SG})
\end{equation}
In the case of $\mathcal{SG}$ we take $V_0 = \{q_0,q_1,q_2\}$, the vertices of the triangle.
We construct a graph $\Gamma_m$ with vertices $V_m$ by defining the edge relation $x\underset{m}\sim y$ if there is a cell of level m containing both $x$ and $y$.
$\Gamma_m$ is obtained by taking the three copies of $F_i\Gamma_{m-1}$ of $\Gamma_{m-1}$ and identifying the points:
$\{F_iq_j = F_jq_i\}_{i\neq j}, i,j=0,1,2$.

\subsection{Spectral decimation}
\label{SpectralDecimation}

The precise definition of spectral decimation can be found in~\cite{DGV2012} for the purpose of our work, it suffices to describe it the following way.
\\~\\
 Let $V_m$ denote the vertices of the m-level approximation of $K$.
We define the Discrete laplacian $\Delta_m$ on $V_m$ by:
\begin{equation}
\label{DiscreteLaplacian}
\Delta_mu(x) := \sum_{x\underset{m}\sim y}(u(y)-u(x)), x,y \in V_m.
\end{equation}
\cite{hare2011disconnected} has shown that for a fully symmetric self similar structure on a finitely ramified fractal $K$, there is a function $R$, called the spectral decimation function relating the eigenvalues of $\Delta_m$ and eigenvalues of $\Delta_{m-1}$.
In the paper, \cite[Proposition 2.18]{shima1996eigenvalue}, Shima shows that $R(0)=0$ and $c_\Delta := R'(0)>1$. Thus $0$ is a repulsive fixed point of the spectral decimation.

\begin{definition}\label{stdLapdef}
Define the standard Laplacian on $K$ as 
\begin{equation}
\label{StdLaplacian}
\Delta u(x) = \lim_n c_\Delta^n \Delta_nu(x), u \in C(K)
\end{equation}
if the limit exist.
\end{definition}
Let $\phi_0,\dots, \phi_L$ be the partial inverses of $R$ with $\phi_0$ satisfying, $\phi_0(0)=0$.

If all the eigenvalues of $\Delta$ have the form 
\begin{equation}
\label{eigenvalue}
\lambda_k = \lim_n c_{\Delta}^n\lambda_k^{(n)} = \lim_n c_{\Delta}^{n_0 +n}\phi_0^n\lambda_k^{(n_0)}, k \in \mathbb{N}
\end{equation}
where $\lambda_k^{(n)}$ is the $k-th$ eigenvalue of $\Delta_n$,
then $\Delta$ is said to admit spectral decimation \cite{hare2011disconnected}.

 The class of fully symmetric self similar finitely ramified fractals admit spectral decimation.

 Note that {Definition}~\ref{stdLapdef} describes a phenomenon where a suitable series of eigenvalues in the finite level approximation of a self-similar set is produced by an orbit of a particular dynamical system. This phenomenon is what is termed Spectral decimation~\cite{fukushima1992spectral}. 
\section{Minimal Spacing of Eigenvalues}
\label{MinimalSpacing}
In this section, we 
first consider the condition for zero infimum spacing.
Subsequently, we prove {Lemma}~\ref{main lemma}. 
Since these fractals admit spectral decimation, it suffices to consider what happens in the discrete case and
thus, {Theorem}~\ref{mainthm} will follow from {Lemma}~\ref{main lemma}. 

\subsection{Zero Infimum Spacing}
\label{ZeroSpacing}
The next theorem gives a condition under which the infimum spacing is zero. Here our main result is Theorem~\ref{ZeroInf}, where we show that the existence of a nonzero repulsive fixed point of the spectral decimation function with multiplier, larger than the zero fixed point indicates zero infimum spacing in the spectrum.
\begin{theorem}
\label{ZeroInf}
     Assume $\Delta$ admits spectral decimation with spectral decimation function $R$ and suppose $0,\zeta >0$ are  fixed point of $R$ with $|R'(\zeta)|>\abs{R'(0)}>1$. 
    Then $\inf\{|\lambda-\lambda'|:\lambda\neq\lambda',\lambda,\lambda'\in \sigma(\Delta)\}=0$.
\end{theorem}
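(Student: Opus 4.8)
The plan is to exploit the competition between the expansion rate $c_\Delta = R'(0)$ used to pass from the finite levels to $\Delta$ and the contraction rate $1/R'(\zeta)$ of the inverse branch of $R$ that fixes $\zeta$; the hypothesis $|R'(\zeta)| > |R'(0)|$ is exactly what makes the second beat the first. First I would record the two dynamical ingredients. Let $\phi_\zeta$ denote the partial inverse of $R$ with $\phi_\zeta(\zeta) = \zeta$; since $|R'(\zeta)| > 1$, the point $\zeta$ is an attracting fixed point of $\phi_\zeta$, so $\phi_\zeta$ is a contraction with multiplier $\phi_\zeta'(\zeta) = 1/R'(\zeta)$ on a neighborhood $U$ of $\zeta$. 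To describe the passage to $\sigma(\Delta)$, I would introduce the Koenigs (Poincar\'e) linearizer of the attracting branch $\phi_0$ at $0$,
\[ \Phi(x) := \lim_{n\to\infty} c_\Delta^n \phi_0^n(x), \]
which converges on the immediate basin of $0$ because $\phi_0'(0) = 1/c_\Delta$, is analytic with $\Phi(0)=0$, $\Phi'(0)=1$, and satisfies $\Phi \circ \phi_0 = \Phi/c_\Delta$. In this language, the eigenvalue formula~\eqref{eigenvalue} says that an eigenvalue $w$ of $\Delta_m$ continued by the $\phi_0$-tail produces the eigenvalue $c_\Delta^{m}\Phi(w) \in \sigma(\Delta)$.

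With this setup, I would build two towers of eigenvalues that merge. Choose a level $n_0$ large enough that $\Delta_{n_0}$ has two distinct eigenvalues $a \neq b$ lying in $U$ (such pairs exist because finite-level eigenvalues accumulate at the repulsive fixed point $\zeta$), and set $a_j := \phi_\zeta^j(a)$ and $b_j := \phi_\zeta^j(b)$, which are eigenvalues of $\Delta_{n_0+j}$ converging to $\zeta$. Continuing each by the $\phi_0$-tail gives the two $\Delta$-eigenvalues $\alpha_j := c_\Delta^{\,n_0+j}\Phi(a_j)$ and $\beta_j := c_\Delta^{\,n_0+j}\Phi(b_j)$, both of which march off to $+\infty$ as $j\to\infty$.

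The estimate is then the easy part. Because $\phi_\zeta$ is a smooth contraction at $\zeta$, the mean value theorem gives $|a_j - b_j| \sim |R'(\zeta)|^{-j}|a-b|$, and since $\Phi$ is Lipschitz near $\zeta$,
\[ |\alpha_j - \beta_j| = c_\Delta^{\,n_0+j}\,|\Phi(a_j)-\Phi(b_j)| \;\lesssim\; c_\Delta^{\,n_0}\,|\Phi'(\zeta)|\,\Bigl(\tfrac{c_\Delta}{|R'(\zeta)|}\Bigr)^{j}|a-b|. \]
Since $|c_\Delta/R'(\zeta)| = |R'(0)/R'(\zeta)| < 1$, the right-hand side tends to $0$; meanwhile $\alpha_j \neq \beta_j$ because $\phi_\zeta$ is injective (so $a_j \neq b_j$) and $\Phi$ is injective near $\zeta$. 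Thus $\{\alpha_j\},\{\beta_j\}\subset\sigma(\Delta)$ are distinct eigenvalues with $0 < |\alpha_j-\beta_j|\to 0$, which forces the infimum spacing to be $0$.

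The analytic heart above is routine; the hard part will be the spectral-decimation bookkeeping needed to make every step legitimate. Specifically, I must verify that (i) there really are two distinct finite-level eigenvalues in every neighborhood of $\zeta$, i.e.\ that $\zeta$ is a limit of points of $\bigcup_m \sigma(\Delta_m)$; (ii) the $\phi_\zeta$-iterates $a_j,b_j$ remain \emph{admissible} eigenvalues, avoiding the finite exceptional/forbidden set of the decimation; (iii) the $\phi_0$-tails converge and also avoid the forbidden set, so that $\alpha_j,\beta_j$ genuinely lie in $\sigma(\Delta)$; and (iv) $\Phi'(\zeta)\neq 0$, which keeps the two towers distinct (if $\Phi$ had a critical point at $\zeta$ the gap would close even faster, but the nonvanishing of $\Phi(a_j)-\Phi(b_j)$ would still need a separate argument). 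All four points are governed by the precise admissibility structure of $R$, so the proof will hinge on translating the hypothesis $|R'(\zeta)|>|R'(0)|>1$ into these finitary conditions.
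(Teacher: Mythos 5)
Your proposal is correct and follows essentially the same route as the paper: contract two distinct finite-level eigenvalues toward $\zeta$ by the inverse branch $\phi_\zeta$ at rate $\abs{R'(\zeta)}^{-1}$, pass to $\sigma(\Delta)$ via the $\phi_0$-tail, and use $c_\Delta=\abs{R'(0)}<\abs{R'(\zeta)}$ to force the renormalized gap to zero. The only real difference is presentational: you package the $\phi_0$-tail as the Koenigs linearizer $\Phi$, whereas the paper proves by hand, via a mean-value-theorem factorization and an infinite product, that $c_\Delta^{j}\abs{(\phi_0^j)'(\gamma_j)}$ converges to a finite limit --- which is precisely the statement that $\Phi$ exists and is differentiable; the admissibility caveats you flag are likewise left to citations in the paper.
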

\begin{proof}
Let $\phi_\zeta$ be the inverse branch of $R$ with $\zeta$ in it's range.
Note that $\zeta$ is in the Julia set of $R$ and is an attracting fixed point of $\phi_\zeta$ and hence we may choose  $n$, and $x_1, x_2 \in \sigma(\Delta_n)$ so that $\phi_\zeta^m(x_j)\to\zeta, j=1,2$ \cite{hare2011disconnected}. Therefore,
we have,
\begin{align*}
        \abs{\phi_\zeta^m(x_1)-\phi_\zeta^m(x_2)}&=\abs{(\phi_\zeta^m(\gamma_m))'}\abs{x_1-x_2}\\
        &=\abs{\phi_\zeta'(\phi_\zeta^{m-1}(\gamma_m))}\dots\abs{\phi_\zeta'(\gamma_m)}\abs{x_1-x_2}
\end{align*}
Note that  $\phi_\zeta$ is a continuous injective map and hence monotone.
Therefore, it is no loss to assume,
$\phi_\zeta^k(x_1)\leq\phi_\zeta^k(\gamma_m)\leq\phi_\zeta^k(x_2), \forall k$.\\
 Thus, $\abs{\phi_\zeta^k(\gamma_m)-\zeta}\leq\max_{j\in\{1,2\}}\abs{\phi_\zeta^k(x_j)-\zeta}$
    Hence, by continuity of $R'$, given $\delta>0$ there is $N$ so that $m\geq k\geq N \implies \abs{R'(\zeta)}-\delta\leq\abs{R'(\phi_\zeta^k(\gamma_m)}$.
    Thus, $\frac{1}{\abs{R'(\phi_\zeta^k(\gamma_m))}}\leq\frac{1}{(R'(\zeta)-\delta)},\forall k\geq N$.
    So for $m>N$, we have,
    \begin{align*}
        \abs{\phi_\zeta^m(x_1)-\phi_\zeta^m(x_2)}&\leq \frac{1}{\abs{R'(\phi_\zeta^m(\gamma_m))}}\dots\frac{1}{\abs{R'(\phi_\zeta(\gamma_m))}}\abs{x_1-x_2}\\
        &\leq\frac{1}{(R'(\zeta)-\delta)^{m-N}}\frac{1}{R'(\phi_\zeta^N(\gamma_m))}\dots\frac{1}{R'(\phi_\zeta(\gamma_m))}\abs{x_1-x_2}.
    \end{align*}
    
    Now note that
    \[
    \abs{c_{\Delta}^{n+j+m}\phi_0^j\phi_\zeta^m(x_1)-c_{\Delta}^{n+j+m}\phi_0^j\phi_\zeta^m(x_2)}=c_{\Delta}^n\cdot c_{\Delta}^j\abs{(\phi_0^j)'(\gamma_j)}\cdot c_{\Delta}^m\abs{\phi_\zeta^m(x_1)-\phi_\zeta^m(x_2)}.
    \]
    for some $\gamma_j$ lying between $\phi_\zeta^m(x_1)$ and $\phi_\zeta^m(x_2)$. Choose $\delta$ so that $R'(0) < \abs{R'(\zeta)}-\delta$ and observe that $c_{\Delta}^m\abs{\phi_\zeta^m(x_1)-\phi_\zeta^m(x_2)}\to 0$ as $m\to \infty $.
    
     Hence by~\cite[Proposition 3.1]{shima1996eigenvalue}, it remains to show that $c_{\Delta}^j\abs{(\phi_0^j)'(\gamma_j)}$ converges to a finite number as $j\to\infty$.

    However,
    \begin{align*}
        c_{\Delta}^j\abs{(\phi_0^j)'(\gamma_j)}=& c_\Delta^j\abs{\phi_0'(\phi_0^{j-1}(\gamma_j))\dots\phi_0'(\gamma_j)}\\
        =&\frac{R'(0)}{R'(\phi_0^j(\gamma_j))}\dots\frac{R'(0)}{R'(\phi_0(\gamma_j))}\\
        =&\frac{R'(0)}{R'(\phi_0(\gamma_j))}\dots\frac{R'(0)}{R'(\phi_0^{N-1}(\gamma_j))}\prod_{k=N}^j\frac{R'(0)}{R'(\phi_0^k(\gamma_j))}.
    \end{align*}
    In addition,   
    \begin{align*}
      \prod_{k=N}^j\frac{R'(0)}{R'(\phi_0^k(\gamma_j))}=&\prod_{k=N}^j\frac{R'(0)}{R'(0)+R''(t_k)\phi_0^k(\gamma_j)}, \ \ \text{$t_k \in (0,\phi_0^k(\gamma_j))$}\\
      =&\prod_{k=N}^j\left(1-\frac{R''(t_k)\phi_0(\gamma_j)}{R'(0)+R''(t_k)\phi_0^k(\gamma_j)}\right).
    \end{align*}
    Which converges if and only if $$\sum_{k=N}^j\frac{R''(t_k)\phi_0(\gamma_j)}{R'(0)+R''(t_k)\phi_0^k(\gamma_j)}$$ converges.
    At the same time, for large $N$,  $$\sum_{k=N}^j\frac{R''(t_k)\phi_0(\gamma_j)}{R'(0)+R''(t_k)\phi_0^k(\gamma_j)}\leq C\sum_{k=N}^j\phi_0^k(\gamma_j)\leq C\sum_{k=N}^js^{k-N}\abs{\phi_0^N\gamma_j}$$ for some $C\in\mathbb{R}$ and $s \in (0,1)$. The first inequality is because $\frac{R''(t_k)}{R'(0)+R''(t_k)\phi_0^k(\gamma_j)}$ converges as $k \to \infty$ and the second inequality is because $\phi_0$ is a contraction near $0$. Since $\abs{\phi_0^N\gamma_j}$ is bounded and $\sum_{k=N}^j s^{k-N}$ is a convergent geometric series, the conclusion of the theorem follows.

\end{proof}

\begin{example}
Level - $3$ Sierpinski Gasket, $\mathcal{SG}_3$:
\ \end{example}

\begin{figure}
\label{SG3image}
\centering
\includegraphics[width=.6\textwidth]{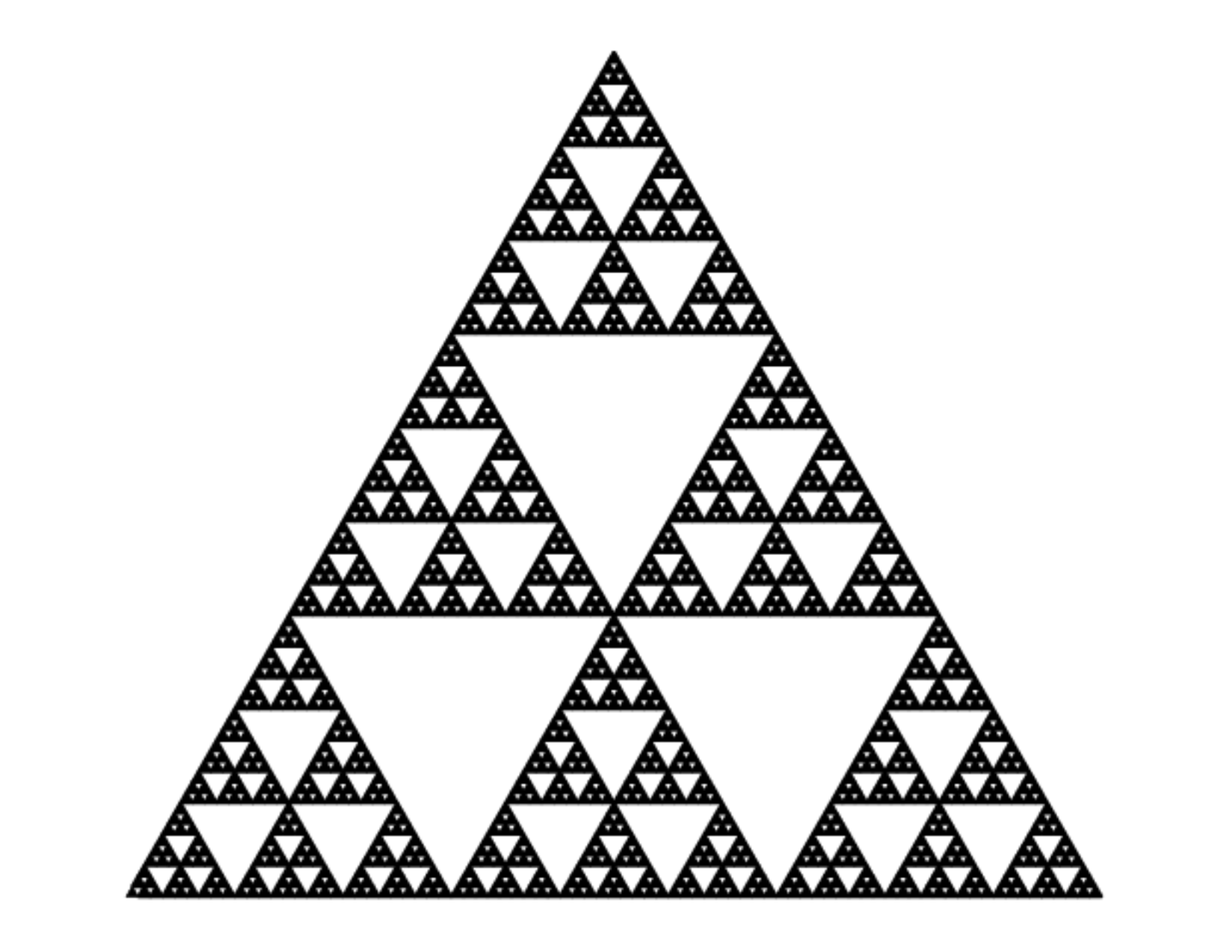}
\caption{An image of $\mathcal{SG}_3$}
\end{figure}

In \cite{bajorin2007vibration}, Bajorin et al have described the spectrum of the discrete Laplacian on $\mathcal{SG}_3$ which is given by:
\[\sigma(\Delta_n) = (R^{-n}(0))\bigcup\Bigg(R^{-(n-1)}\Bigg(\frac{3 \pm \sqrt{5}}{4}\Bigg)\Bigg)\bigcup \Bigg\{\frac{3}{2}\Bigg\}, n\geq 0\]
and whose spectral decimation function is 
\[R(x)=\frac{6x(x-1)(4x-5)(4x-3)}{6x-7}\]
with derivative,
\[R'(x) = \frac{6(288x^4-1024x^3+1290x^2-658x+105)}{(6x-7)^2}\]
Note that $0,1,\frac{3}{4},\frac{5}{4} \in R^{-n}(0) \subseteq \sigma(\Delta_n)$.
Also R has a fixed point between $\frac{5}{4}$ and $1.3$.
And it is just a matter of computation to show $R'(0)=\frac{90}{7} < 25.5428 \approx \inf_{y \in [1.2,1.5]}R'(y)$, for $y \in [1.2,1.5]$.
Hence, by {Theorem}~\ref{ZeroInf} we must have that the infimum spacing in $\sigma(\Delta)$ is zero.

\subsection{Positive Infimum Spacing}
\label{PositiveSpacing}

The proof of the next result relies on mean value theorem and induction and is crucial in proving the main Theorem~\ref{mainthm} which guarantees that the infimum spacing in the spectrum is positive under certain conditions.
\begin{lemma}
\label{main lemma}
Assume $\Delta$ admits spectral decimation with spectral decimation function $R$ and suppose $0$ is a  fixed point of $R$ with $\abs{R'(0)}>1$. 
   
    Let $D_0=\{0,x_1,\dots,x_r\}$ be a finite set with $0<x_1<\cdots<x_r$.
    Suppose, the interval $[0,x_r]$ is backward invariant under $R$ and the inverse branches,  $\phi_j's$ of $R$ are defined on all of this interval. 
    
    Assume further that
    \begin{itemize}
        \item $R^{-1}(0,x_r)\cap D_0 =\emptyset, \ \  R^{-1}\{0,x_r\}\subseteq D_0$\\
        \item $R'(0) = \max_{x\in R^{-1}[0,x_r]}|R'(x)|$
    \end{itemize}
    and set $D_n:= \cup_{m\geq 0}^n R^{-m}(D_0)$.
    Then there is a constant $C_0$, depending on $D_0$ such that,
\[
min_{x,y\in D_n, x\neq y}|x-y|\geq \frac{C_0}{(R'(0))^n}.
\]
\end{lemma}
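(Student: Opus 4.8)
The plan is to prove the estimate by induction on $n$, exploiting the self-referential structure $D_n = D_0 \cup R^{-1}(D_{n-1})$. Writing $\delta_n := \min_{x\neq y \in D_n}|x-y|$, I will show the recursive bound $\delta_n \geq \min\{\delta_{n-1}/R'(0),\ \delta_0\}$; since $R'(0)>1$ this iterates to $\delta_n \geq \delta_0/(R'(0))^n$, so the asserted constant is simply $C_0 = \delta_0$, the minimal spacing of $D_0$, which depends only on $D_0$.

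First I would set up the cell decomposition. Each inverse branch $\phi_j$ carries $[0,x_r]$ onto a closed subinterval $J_j := \phi_j([0,x_r])$, and by backward invariance these cells are exactly the connected components of $R^{-1}[0,x_r]\subseteq[0,x_r]$, meeting at most at endpoints and otherwise separated by gaps on which $R$ leaves $[0,x_r]$. The two hypotheses on $D_0$ then locate the points of $D_n$: since $R$ maps the interior of each $J_j$ onto the open interval $(0,x_r)$, the condition $R^{-1}(0,x_r)\cap D_0=\emptyset$ forces every point of $D_0$ to lie at a cell endpoint or strictly inside a gap, never in a cell interior; and $R^{-1}\{0,x_r\}\subseteq D_0$ together with $0,x_r\in D_0$ guarantees conversely that every cell endpoint lies in $D_0$. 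Finally, each point of $R^{-1}(D_{n-1})$ lies in some cell, being a $\phi_j$-image of a point of $D_{n-1}\subseteq[0,x_r]$.

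Next I would establish a dichotomy for an adjacent pair $a<b$ in $D_n$ (adjacent meaning no element of $D_n$ lies strictly between them). If $a$ and $b$ lie in a common cell $J_j$, then $a=\phi_j(a')$ and $b=\phi_j(b')$ with $a'=R(a)$, $b'=R(b)$ distinct points of $D_{n-1}$, and the mean value theorem gives $|a-b|=|\phi_j'(\xi)|\,|a'-b'|$ for some $\xi$ interior to $(a',b')$, where $\phi_j(\xi)$ lies in the cell interior and $R'$ does not vanish. The hypothesis $R'(0)=\max_{x\in R^{-1}[0,x_r]}|R'(x)|$ then yields $|\phi_j'(\xi)|=1/|R'(\phi_j(\xi))|\geq 1/R'(0)$, so $|a-b|\geq\delta_{n-1}/R'(0)$. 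Otherwise $a$ and $b$ lie in different cells or gaps; here I would argue that such a transition can occur only between points of $D_0$, because if one of $a,b$ were interior to its cell, the nearer endpoint of that cell would be a point of $D_0\subseteq D_n$ lying strictly between $a$ and $b$, contradicting adjacency. Hence both $a$ and $b$ are structural points of $D_0$ and $|a-b|\geq\delta_0$. Combining the two cases gives the recursive estimate, and the base case $n=0$ reads $\delta_0\geq\delta_0$, completing the induction.

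The mean value step and the induction are routine; the main obstacle is the cross-region case, where one must use both hypotheses on $D_0$ in tandem — the first to rule out a point of $D_0$ sitting in a cell interior (which would destroy the contraction bound), and the second to force every cell-to-cell or cell-to-gap transition of an adjacent pair to occur between two points of $D_0$, whose separation is the fixed constant $\delta_0$ independent of $n$. Some care is also needed at shared cell endpoints and at gaps that happen to contain points of $D_0$, but in each such configuration the presence of a point of $D_0$ strictly between $a$ and $b$ would violate adjacency, which is precisely the mechanism that closes the argument.
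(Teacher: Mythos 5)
Your proposal is correct and follows essentially the same route as the paper: the mean value theorem together with the hypothesis $R'(0)=\max_{x\in R^{-1}[0,x_r]}|R'(x)|$ controls pairs lying in a common inverse-branch cell, the two structural hypotheses on $D_0$ handle pairs straddling cell boundaries or gaps, and induction propagates the bound with constant $C_0=\delta_0$. If anything, your explicit recursion $\delta_n\geq\min\{\delta_{n-1}/R'(0),\delta_0\}$ and the adjacency dichotomy spell out the inductive step more carefully than the paper, which treats only the passage from $D_0$ to $D_1$ and asserts the general case.
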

\begin{proof}
Let $C_0 = min_k |x_k -x_{k-1}|, x_j \in D_0$.
We know $D_1= D_0 \cup R^{-1}(D_0)$.
If $x,y \in D_0, x\neq y$, the we obtain
\[
|x-y| \geq C_0 \geq \frac{C_0}{R'(0)}.
\]

If $x \in D_0$ and $y\in R^{-1}(D_0)$,
then $y\in R^{-1}[0,x_r]$. Thus $y\in \phi_k[0,x_r]$, for some k.
But $R^{-1}(0,x_r)\cap D_0 \neq \emptyset, R^{-1}\{0,x_r\} \subseteq D_0$ implies $\abs{x-y} \geq \min_{z\in \{0,x_r\}}\abs{y-\phi_k(z)}$.
For $z\in \{0,x_r\}$, we have,
$\left|\frac{R(y)-R(\phi_k(z))}{y-\phi_k(z)}\right|=\abs{R'(c)}\leq R'(0)$, for some $c\in\phi_k[0,x_r]$.
Thus, $\abs{y-\phi_k(z)}\geq \frac{\abs{R(y)-R(\phi_k(z))}}{R'(0)}\geq \frac{C_0}{R'(0)}$

For $x,y \in R^{-1}(D_0), x\neq y$:
We can assume $x,y\in \phi_k[0,x_r]$, for some $k$, since $\phi_j\{0,x_r\}\subseteq D_0$ for all $j$. 
$R$ bijection on this interval implies $R(x) \neq R(y)$.
We obtain 
\begin{align*}
\frac{|R(x)-R(y)|}{|x-y|} &= R'(c)\leq R'(0) & \text{for some $c\in \phi_k[0,x_r]$}\\
\end{align*}
Thus,
\begin{align*}
 |x-y|&\geq \frac{|R(x)-R(y)|}{R'(0)}\geq \frac{C_0}{R'(0)}
 &\text{since $R(x), R(y) \in D_0$}.
\end{align*}
The general case follows by induction and this completes the proof.
\end{proof}
 
\begin{theorem}
\label{mainthm}
    Suppose there is $D_0$, so that $\sigma(\Delta_n) \subseteq D_n$ for all $n$ and $R,D_n$ satisfy the conditions of {Lemma}~\ref{main lemma}.
    Then,
    \[
    \inf\{|\lambda-\lambda'|:\lambda\neq\lambda' \text{ are Dirichlet eigenvalues of } \Delta\}>0
    \]
    \[
    \inf\{|\lambda-\lambda'|:\lambda\neq\lambda' \text{ are Neumann eigenvalues of } \Delta\}>0.
    \]
\end{theorem}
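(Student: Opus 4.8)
The plan is to reduce the continuous spacing problem entirely to the discrete estimate already proved in Lemma~\ref{main lemma}, and then transport that bound to the limit by means of the spectral decimation representation~(\ref{eigenvalue}). The Dirichlet and Neumann cases can be handled simultaneously: in both families the relevant level-$n$ eigenvalues lie in $\sigma(\Delta_n)\subseteq D_n$ and are generated by iterating the branch $\phi_0$, so I would simply argue for an arbitrary pair of distinct eigenvalues drawn from whichever spectrum is under consideration. Thus fix two distinct eigenvalues $\lambda\neq\lambda'$ of $\Delta$. By~(\ref{eigenvalue}) each arises as a limit of rescaled discrete eigenvalues; writing $a_n,a_n'\in\sigma(\Delta_n)\subseteq D_n$ for the level-$n$ approximants, we have $\lambda=\lim_n c_\Delta^n a_n$ and $\lambda'=\lim_n c_\Delta^n a_n'$, where beyond the respective initial levels $n_0$ at which each eigenvalue is born the approximants evolve by $a_{n+1}=\phi_0(a_n)$.

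The decisive arithmetic is that $c_\Delta=R'(0)$, so the amplifying factor $c_\Delta^n$ exactly cancels the geometric decay supplied by the Lemma. Indeed, at any level where $a_n\neq a_n'$, Lemma~\ref{main lemma} gives $|a_n-a_n'|\geq C_0/(R'(0))^n$, hence $c_\Delta^n|a_n-a_n'|=(R'(0))^n|a_n-a_n'|\geq C_0$. Since $c_\Delta^n a_n\to\lambda$ and $c_\Delta^n a_n'\to\lambda'$ we have $c_\Delta^n|a_n-a_n'|\to|\lambda-\lambda'|$, and passing to the limit yields $|\lambda-\lambda'|\geq C_0$. Because $C_0>0$ is the constant furnished by the Lemma and is independent of the chosen pair, the infimum over all distinct pairs is bounded below by $C_0>0$, which is exactly the assertion of the theorem for each of the two spectra.

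The step that genuinely requires care --- and which I expect to be the main obstacle --- is guaranteeing that $a_n\neq a_n'$ for all large $n$, since the cancellation above is vacuous at any level where the two approximants coincide. Here I would invoke the injectivity of $\phi_0$: if $a_m=a_m'$ at some level $m$ at or beyond both birth levels, then $a_n=a_n'$ for every $n\geq m$, since both towers thereafter evolve under the same map $\phi_0$, and consequently $\lambda=\lim_n c_\Delta^n a_n=\lim_n c_\Delta^n a_n'=\lambda'$, contradicting $\lambda\neq\lambda'$. Hence distinct eigenvalues have distinct approximants at every sufficiently high level, and the estimate applies for all large $n$. A secondary point to pin down is that the spectral decimation correspondence is well defined on both the Dirichlet and Neumann families, with new eigenvalues born (via the ``exceptional values'' of $R$) only at finitely many initial levels, so that the tower description and the containment $\sigma(\Delta_n)\subseteq D_n$ hold uniformly; once these are checked, the proof is complete.
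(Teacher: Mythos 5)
Your proposal is correct and follows essentially the same route as the paper: the paper's (much terser) proof simply observes that Lemma~\ref{main lemma} together with $\sigma(\Delta_n)\subseteq D_n$ gives a uniform lower bound $C_0$ on the spacing in $\sigma(c_\Delta^n\Delta_n)$ because $c_\Delta^n=(R'(0))^n$ cancels the $(R'(0))^{-n}$ decay, and then passes to the limit via the spectral decimation representation~(\ref{eigenvalue}). Your additional care in checking that distinct limit eigenvalues have distinct level-$n$ approximants (via injectivity of $\phi_0$) is a detail the paper leaves implicit, but it does not change the argument.
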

\begin{proof}
It follows from {Lemma}~\ref{main lemma} that, the spacing in  $\sigma(c_\Delta^n\Delta_n)$ are bounded below uniformly in $n$, where $c_\Delta = R'(0)$. The conclusion follows.
\end{proof}

 Though the following examples are very easy, they may be useful for understanding the results obtained in this section and how they apply to other complicated scenarios.
\begin{example}($\mathcal{SG}$.)
\end{example}

Alonso Ruiz shows in \cite[Theorem 3]{alonso2022minimal} that,
\[\min\{|\lambda - \lambda'|: \lambda \neq \lambda', \text{Dirichlet eigenvalues of } \Delta\} = \lambda_0^{(5)} - \lambda_0^{(2)} >0\]
Thus the infimum spacing of eigenvalues coincides with the spectral gap which is positive.

The same paper, \cite[Theorem 3]{alonso2022minimal}, gives that
\[\min\{|\lambda - \lambda'|: \lambda \neq \lambda', \text{Neumann eigenvalues of } \Delta\} = \lambda_0^{(6)}  > 0\]
and we see the infimum spacing is again positive.

\begin{example}($I$)
\end{example}
Since we know the spectrum of the Laplacian on $I$, It is just a matter of easy computation to show that the infimum spacing is again the spectral gap.\\
Note that the Dirichlet spectrum is given by $\{\pi^2k^2\}_{k=1}^\infty$.
Hence spacing between subsequent eigenvalues is given by 
\[\pi^2(k+1)^2 -\pi^2k^2 
=\pi^2((k+1)^2-k^2)
=\pi^2(2k+1)\]
which is increasing in $k$, hence the infimum spacing is again the spectral gap which occurs when $k=1$.
Thus
\[\min\{|\lambda - \lambda'|: \text{ Dirichlet eigenvalues of } \Delta\} = 3\pi^2 > 0\].

Recall the Neumann spectrum is given by $\{\pi^2k^2\}_{k=0}^\infty$.
Thus the spacing between subsequent eigenvalues is again given by:
$\pi^2(2k+1)$, which coincides with the Dirichlet case, with minimum occurring when $k=0$.
Thus 
\[\min \{|\lambda - \lambda'|: \text{Neumann eigenvalues of } \Delta\}= \pi^2 >0. \]

\begin{remark}
    Typically, for an appropriate choice of $D_0$, we pick a large $n$, and set,
    $D_0=R^{n}\sigma(\Delta_n)$.
\end{remark}

\begin{appendices}
   \section{Wielandt's Inequalities} 
   In the {\it theorem} below, $\lambda^{\downarrow}(T)$ represents the spectrum of the operator $T$ in decreasing order and $\lambda_j^{\downarrow}(T)$ is the $j'th$ element of this set.
   \begin{theorem}{(Wielandt's Inequalities)}\cite{koltchinskii2000random}
   \label{wielant_inequalites}
    Let A be a symmetric operator on $\mathbb{R}^n$ such that $\lambda_d^{\downarrow}(A)-\lambda_{d+1}^{\downarrow}(A)>0$, for some $1\leq d <n$. Let $P^d$ denote the orthogonal projector of $\mathbb{R}^n$ onto the subspace generated by the eigenvectors corresponding to the largest $d$ eigenvalues of $A$, and let $P_d$ denote the orthogonal projector onto the subspace generated by the remaining eigenvectors of $A$. If $B$ is a symmetric operator such that $P_dBP_d=0$ and $P^dBP^d=0$, then 
    \begin{equation}
        0\leq \lambda_j^{\downarrow}(A+B)-\lambda_j^{\downarrow}(A) \leq \frac{\norm{B}^2}{\lambda_j^{\downarrow}(A)-\lambda_{d+1}^{\downarrow}(A)}, j=1,\dots,d,
    \end{equation}
    and 
    \begin{equation}
        0 \leq \lambda_j^{\downarrow}(A)-\lambda_j^{\downarrow}(A+B)\leq \frac{\norm{B}^2}{\lambda_d^{\downarrow}(A)-\lambda_j^{\downarrow}(A)}, j=d+1,\dots,n.
    \end{equation}
   \end{theorem}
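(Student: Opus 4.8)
The plan is to work in the orthogonal decomposition $\mathbb{R}^n = H^d \oplus H_d$, where $H^d = \operatorname{ran} P^d$ is spanned by the top $d$ eigenvectors of $A$ and $H_d = \operatorname{ran} P_d$ by the remaining $n-d$. In this splitting $A$ is block diagonal, $A = A_1 \oplus A_2$, where $A_1 = P^d A P^d$ carries the eigenvalues $\lambda_1^{\downarrow}(A) \geq \cdots \geq \lambda_d^{\downarrow}(A)$ and $A_2 = P_d A P_d$ carries $\lambda_{d+1}^{\downarrow}(A) \geq \cdots \geq \lambda_n^{\downarrow}(A)$; the gap hypothesis reads $\lambda_{\min}(A_1) = \lambda_d^{\downarrow}(A) > \lambda_{d+1}^{\downarrow}(A) = \lambda_{\max}(A_2)$. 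The two hypotheses $P^d B P^d = 0$ and $P_d B P_d = 0$ say precisely that $B$ has vanishing diagonal blocks, so $B$ is block off-diagonal with blocks $B_{12}$ and $B_{21} = B_{12}^{*}$ (the adjoint relation coming from symmetry of $B$). I would split the argument into the two ``$0 \leq$'' monotonicity bounds, handled by the min--max principle, and the two quantitative upper bounds, handled by a Schur-complement reduction.

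For the monotonicity bounds I would invoke Courant--Fischer. The crucial observation is that for any $x \in H^d$ one has $\langle Bx, x\rangle = \langle P^d B P^d x, x\rangle = 0$, so $\langle (A+B)x, x\rangle = \langle Ax, x\rangle$ on $H^d$. Restricting the max--min characterization $\lambda_j^{\downarrow}(A+B) = \max_{\dim S = j}\min_{x \in S,\, \norm{x}=1}\langle (A+B)x, x\rangle$ to test spaces $S \subseteq H^d$ then yields $\lambda_j^{\downarrow}(A+B) \geq \lambda_j^{\downarrow}(A)$ for $1 \leq j \leq d$. The symmetric computation on $H_d$, using $P_d B P_d = 0$ together with the dual min--max characterization, gives $\lambda_j^{\downarrow}(A+B) \leq \lambda_j^{\downarrow}(A)$ for $d+1 \leq j \leq n$. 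These are exactly the left-hand inequalities.

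For the upper bound with $1 \leq j \leq d$, write $\mu = \lambda_j^{\downarrow}(A+B)$; by the monotonicity just proved, $\mu \geq \lambda_j^{\downarrow}(A) \geq \lambda_d^{\downarrow}(A) > \lambda_{\max}(A_2)$, so $\mu \mathds{I} - A_2$ is positive definite. Eliminating the $H_d$-component of an eigenvector through the Schur complement converts the eigenvalue equation into the self-consistent problem $M(\mu) u = \mu u$ with $M(\mu) = A_1 + B_{12}(\mu \mathds{I} - A_2)^{-1}B_{21}$. Since $(\mu \mathds{I} - A_2)^{-1}$ is positive with norm $(\mu - \lambda_{d+1}^{\downarrow}(A))^{-1}$ and $\norm{B_{12}} \leq \norm{B}$, I get the operator sandwich
\[
A_1 \leq M(\mu) \leq A_1 + \frac{\norm{B}^2}{\mu - \lambda_{d+1}^{\downarrow}(A)}\, \mathds{I}.
\]
Weyl's monotonicity theorem applied to this sandwich, together with the identification $\mu = \lambda_j^{\downarrow}(M(\mu))$ and $\lambda_j^{\downarrow}(A_1) = \lambda_j^{\downarrow}(A)$, gives $\mu - \lambda_j^{\downarrow}(A) \leq \norm{B}^2/(\mu - \lambda_{d+1}^{\downarrow}(A))$; replacing $\mu$ by the smaller $\lambda_j^{\downarrow}(A)$ in the denominator, which is legitimate because $\mu \geq \lambda_j^{\downarrow}(A)$, yields the stated bound. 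The case $d+1 \leq j \leq n$ is entirely parallel: there $\mu < \lambda_{\min}(A_1)$, so eliminating the $H^d$-component produces $N(\mu) = A_2 - B_{21}(A_1 - \mu)^{-1}B_{12}$ with $A_2 - \norm{B}^2(\lambda_d^{\downarrow}(A) - \mu)^{-1}\mathds{I} \leq N(\mu) \leq A_2$, and the same Weyl plus denominator-replacement argument closes it.

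The step I expect to be the main obstacle is the index identification $\mu = \lambda_j^{\downarrow}(M(\mu))$ --- that $\mu$ is not merely some eigenvalue of the self-consistent matrix $M(\mu)$ but precisely its $j$-th one. I would establish this through the Schur determinant factorization $\det(A+B-\nu \mathds{I}) = \det(A_2 - \nu \mathds{I})\,\det(M(\nu) - \nu \mathds{I})$, valid for $\nu \notin \sigma(A_2)$, which shows that the eigenvalues of $A+B$ lying above $\lambda_{d+1}^{\downarrow}(A)$ are exactly the zeros $\nu$ of $\det(M(\nu) - \nu\mathds{I})$ counted with multiplicity. Since $M(\nu)$ is decreasing in $\nu$ in the operator order, each map $\nu \mapsto \lambda_k^{\downarrow}(M(\nu)) - \nu$ is strictly decreasing with a single root $\nu_k$, the roots inherit the ordering $\nu_1 \geq \cdots \geq \nu_d$, and the monotonicity bounds already force exactly these $d$ eigenvalues of $A+B$ to lie above the gap; matching orders then gives $\nu_j = \lambda_j^{\downarrow}(A+B)$ and hence the required identity.
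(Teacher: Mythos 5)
Your proposal cannot be checked against a proof in the paper, because the paper contains none: Wielandt's inequalities appear only in the appendix as a result quoted from \cite{koltchinskii2000random}, used to motivate the study of infimum spacing, and no argument is reproduced. Judged on its own merits, your proof is correct and self-contained. The block decomposition correctly encodes the hypotheses $P^dBP^d = P_dBP_d = 0$ as off-diagonality of $B$; the Courant--Fischer step with test spaces inside $H^d$ (respectively $H_d$) gives the two one-sided inequalities; and the Schur-complement reduction with the sandwich $A_1 \leq M(\mu) \leq A_1 + \norm{B}^2(\mu-\lambda_{d+1}^{\downarrow}(A))^{-1}\mathds{I}$ plus Weyl monotonicity yields the quantitative bounds, the final denominator replacement being licensed by the monotonicity you proved first. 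Your handling of the index identification $\mu = \lambda_j^{\downarrow}(M(\mu))$, which is indeed the delicate point, is also sound: the one-sided inequalities force exactly $d$ eigenvalues of $A+B$ (with multiplicity) to lie strictly above $\lambda_{d+1}^{\downarrow}(A)$, each $f_k(\nu) = \lambda_k^{\downarrow}(M(\nu)) - \nu$ is continuous and strictly decreasing, and order-matching of the two $d$-element multisets finishes it. Two points you leave implicit are easy to supply and worth writing out: first, the \emph{existence} (not just uniqueness) of each root $\nu_k$, which follows because $M(\nu) \geq A_1$ gives $f_k(\nu) \geq \lambda_k^{\downarrow}(A_1) - \nu > 0$ as $\nu$ decreases toward $\lambda_{d+1}^{\downarrow}(A)$, while $f_k(\nu) \to -\infty$ as $\nu \to \infty$; second, the multiplicity matching, which is made precise by the linear bijection $u \mapsto \bigl(u, (\mu\mathds{I}-A_2)^{-1}B_{21}u\bigr)$ between $\ker\bigl(M(\mu)-\mu\mathds{I}\bigr)$ and $\ker\bigl(A+B-\mu\mathds{I}\bigr)$. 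As a comparison of approaches: classical treatments of this inequality (going back to Wielandt, and as used in the cited reference) are usually phrased through min--max arguments with two-block quadratic-form estimates, whereas your route through the self-consistent Schur-complement equation is somewhat longer but delivers the slightly sharper intermediate bound $\lambda_j^{\downarrow}(A+B)-\lambda_j^{\downarrow}(A) \leq \norm{B}^2/\bigl(\lambda_j^{\downarrow}(A+B)-\lambda_{d+1}^{\downarrow}(A)\bigr)$, with the perturbed eigenvalue in the denominator, from which the stated form follows immediately.
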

\end{appendices}

\subsection*{Acknowledgments}  

This research was partially  supported by NSF grant  DMS-2349433. \\
I am deeply grateful to Alexander Teplyaev for his invaluable guidance while working on this project.
I also acknowledge the interesting conversation on some aspects of this work with Christopher Hayes and Kasso Okoudjou.

\bibliographystyle{amsalpha}
\bibliography{paperref}

\end{document}